\numberwithin{equation}{section}
\newtheorem{thm}{Theorem}[section]
\newtheorem{cor}[thm]{Corollary}
\newtheorem{lem}[thm]{Lemma}
\newtheorem{prop}[thm]{Proposition}
\newtheorem{defn}[thm]{Definition}
\numberwithin{equation}{section}
\begin{document}

%--------------------------------------------------
%%Don not change any thing in this part
\leftline{ \scriptsize}

\vspace{1.3 cm}
%----------------------------------------------------------------------------
\title
{ On the number of near-vector spaces determined by finite fields}
\author{ Kijti Rodtes and  Wilasinee Chomjun }
\thanks{{\scriptsize
%\hskip -0.4 true cm MSC(2000): Primary   ????; Secondary ????
\newline Keywords: Finite fields, Near-vector spaces}}
\hskip -0.4 true cm

\maketitle

%-----------------------------------------------------------------------------

%----------------------------------------------
\begin{abstract}  A slip on a paper concerning near-vector spaces is fixed.  New characterization of near-vector spaces determined by finite fields is provided and the number (up to the isomorphism) of these spaces is exhibited.
\end{abstract}

\vskip 0.2 true cm

%-----------------------------------------------------------------------------

\pagestyle{myheadings}
\markboth{\rightline {\scriptsize Kijti Rodtes and  Wilasinee Chomjun}}
         {\leftline{\scriptsize }}
\bigskip
\bigskip

%-----------------------------------------------------------------------------
%-----------------------------------------------------------------------------

\vskip 0.4 true cm

\section{Introduction}
In 1974,  Andre introduced and studied the concept of near-vector spaces. Later several researchers,  for example, Van der walt, Howell, Mayer and Tim Boykett, paid attention to investigate such concept.  In 2010, Howell and Mayer classified near-vector spaces over finite fields of $p$ ($p$ is prime ) elements up to isomorphism. They also extended the result to a finite field of  $p^n$ elements in Theorem 3.9, \cite{KHM}. This theorem assertes that the number of near-vector spaces $V=\mathbb{\mathbb{F}}^{\oplus m}$ over a finite field $\mathbb{F}=GF(p^n)$ is exactly $$\left ( \begin{array}{c} m+\tfrac{\phi(p^{n}-1)}{n}-2 \\ m-1 \end{array}\right )$$ up to the isomorphism in Definition \ref{defiso}, where $\phi$ is the Euler's totient function . This number is calculated based on the number of distinct suitable sequences, (Definition 2.3 in \cite{KHM}). Namely, if $A^{\ast}_{1}$ and $A^{\ast}_{2}$ are determined by suitable sequences $(S_1)$ and $(S_2)$, respectively, then $(\mathbb{F}^{\oplus m},A^{\ast}_{1}) \cong (\mathbb{F}^{\oplus m},A^{\ast}_{2})$ if and only if  $(S_1)=(S_2).$ \\

However, for the case $m=4, n=3$ and $p=3,$ it turns out that $(\mathbb{F}^4,A^{\ast}_{1}) \cong (\mathbb{F}^4,A^{\ast}_{2})$, where $A^{\ast}_{1}=\{s_\alpha \,|\, \alpha \in \mathbb{F} \}$ and $A^{\ast}_{2}=\{t_\beta \,|\, \beta \in \mathbb{F} \}$ are constructed using the sequences $(S_1) = (1,1,5,5)$ and  $(S_2) = (1,1,7,7)$, respectively, which are distinct suitable sequences. Precisely,  the isomorphism is obtained by the group isomorphisms $\theta: \mathbb{F}^{\oplus 4} \longrightarrow \mathbb{F}^{\oplus 4}$ defined by $\theta(x_1,x_2,x_3,x_4):= (x_3,x_4,x_{1}^{9},x_{2}^{9})$ and $\eta:A^{\ast}_{1}\longrightarrow A^{\ast}_{2}$ defined by $\eta(s_\alpha):= t_{\alpha ^5}$, which can be seen that;
 \begin{eqnarray*}
\theta ((x_1,x_2,x_3,x_4)s_\alpha) & =& \theta(x_1\alpha,x_2\alpha,x_3\alpha^{5},x_4\alpha^{5})\\
   & =& (x_3\alpha^5,x_4\alpha^5,x_{1}^{9}\alpha^{9},x_{2}^{9}\alpha^{9})  \\
   & = &(x_3\alpha^5,x_4\alpha^5,x_{1}^{9}(\alpha^5)^7,x_{2}^{9}(\alpha^5)^7)\\
   & = & (x_3,x_4,x_{1}^{9},x_{2}^{9})t_{\alpha^5}\\
  & = &  \theta (x_1,x_2,x_3,x_4)\eta(s_\alpha)
\end{eqnarray*}
for all $(x_1,x_2,x_3,x_4)\in \mathbb{F}^{\oplus 4},s_\alpha\in A^{\ast}_{1}$.
This contradicts to the main results of the paper. A slip can be found in the proof of Theorem 3.9 in \cite{KHM} (line 17th in the proof) in which there is using the isomorphism $\eta$ to be $\eta(s_\alpha)=t_\alpha.$ In fact, this should be $\eta(s_\alpha)=t_{\alpha^q},$ for some $1\leq q \leq p^{n}-1$ and $\gcd(q,p^{n}-1)=1.$ \\

In this article, the slip is fixed and a criteria for the classification of near-vector spaces $\mathbb{F}^{\oplus m}$ over a finite field $\mathbb{F}=GF(p^n)$ is provided.  The number of near-vector spaces up to the isomorphism is also displayed based on subgroups lattice of the abelian group $G:=U(p^n-1)/\left< p \right>$.

  \section{Preliminary}
Let $p$ be a prime, $n$ be a positive integer and  $\mathbb{F}=\operatorname{GF}(p^n)$, be a field of $p^n$ elements.  We first recall the definition of a near-vector space over a finite field $\mathbb{F}$.
\begin{defn}\label{defnear}(\cite{JA}, cf. \cite{KHM}.) A pair $(V , A)$ is called a \textbf{near-vector space} if:
\begin{enumerate}
  \item $(V , +)$ is a group and $A$ is a set of endomorphisms of $V$;
  \item $A$ contains the endomorphisms $0$, $id$ and $-id$;
  \item $A^{\ast }= A \backslash \{0\}$ is a subgroup of the group Aut$(V )$;
  \item $ A$ acts fixed point freely on $V$; i.e., for $ x \in V$  and $\alpha ,\beta \in A, x\alpha = x\beta$ implies that $x = 0$ or
$\alpha = \beta$;
  \item The quasi-kernel $Q (V )$ of $V$, generates $V$ as a group. Here, $Q (V ) = \{x \in V : \forall \alpha, \beta \in  A, \exists \gamma \in A \, $  such that  $x\alpha + x\beta = x\gamma \}$.
\end{enumerate}
\end{defn}
The $dimension$ of the near-vector space, $\dim(V )$, is uniquely determined by the cardinality of an independent generating set for $Q (V )$.

\begin{defn}\label{defiso}(\cite{KHM}) Two near-vector spaces $(V_1, A_1)$ and $(V_2, A_2)$ are \textbf{isomorphic}, written by $(V_1, A_1)\cong (V_2, A_2)$, if there are group isomorphisms  $\theta : (V_1,+)\rightarrow (V_2,+)$ and $\eta	:( A^{\ast} _{1},\cdot)\rightarrow(A^{\ast} _{2},\cdot )$ such that $\theta(x\alpha)=\theta(x)\eta(\alpha)$  for all $x\in V_1$ and $\alpha\in A^{\ast}_{1}$.
\end{defn}
In fact, the group isomorphism $\eta	:( A^{\ast} _{1},\cdot)\rightarrow(A^{\ast} _{2},\cdot )$ can be extended to a semigroup isomorphism $\widehat{\eta}:A_1\longrightarrow A_2$ by setting $\widehat{\eta}(0)=0$ and $\widehat{\eta}(\alpha)=\eta(\alpha)$, for all $\alpha\in A^{\ast} _{1} $.  For a near-vector space $(V , A)$, the endomorphisms in $A$ are sometimes called \emph{scalars} and the action of these endomorphisms on the elements of $V$ is sometimes called \emph{scalar multiplication}.  \\

Recall that a (right) \emph{near-field} is a triple $(F,+,\cdot)$ that satisfies all the axiom of a skew-field, except perhaps the left distributive law, \cite{KHM}.  By, Theorem 3.5 in \cite{KHM}, any finite dimensional near-vector space $(V,A)$ can be decomposed as $\bigoplus_{i=1}^{m} (F_i, F_i)$, where $m=\dim(V)$ and $F_i$'s are near-fields.  Namely, there exist semigroup isomorphisms $\psi_i:(A,\circ)\longrightarrow (F_i,\cdot)$ and an additive group isomorphism $\phi:V\longrightarrow F_1\oplus \cdots \oplus F_m$ such that if $\phi(v)=(x_1,\dots, x_m)$ then $\phi(v\alpha)=(x_1\psi_1(\alpha),\dots,x_m\psi_m(\alpha))$, for all $v\in V, \alpha\in A$.\\

In the case of $F_i=\mathbb{F}$ for all $i=1,\dots m$, all such near-vector spaces, which we now call \emph{near-vector spaces over a finite field}, are determined by semigroup automorphisms $\psi_i:(\mathbb{F},\cdot)\longrightarrow (\mathbb{F},\cdot)$.  Precisely, for a near-vector space $(V,A)$ with $V=\mathbb{F}^{\oplus m}:=\mathbb{F}\oplus \cdots \oplus\mathbb{F}$ and $A=\{s_\alpha \,|\, \alpha\in \mathbb{F} \}$, the scalar multiplication on $V$ is given by
$$ (x_1,\dots, x_m)s_\alpha=(x_1\psi_1(\alpha),\dots,x_m\psi_m(\alpha)), $$
for every $\alpha \in \mathbb{F}$.  \\

Thus, the classification of near-vector spaces over finite fields depends on semigroup automorphisms $\psi_i$'s and their actions.  By Proposition 2.5 in \cite{KHM}, each $\psi_i$ is given by $\psi_i(x)=x^{q_i},  \forall x\in \mathbb{F}^\ast$, for some $q_i\in U(p^n-1):=\{ 1\leq q \leq p^n-1 \,|\, \operatorname{gcd}(q,p^n-1)=1\}$ (multiplicative group).  Moreover, for $A=\{ s_\alpha\,|\, \alpha \in \mathbb{F}\}$, $\widetilde{A}_\sigma=\{ \sigma_\alpha\,|\, \alpha \in \mathbb{F}\}$ and $\overline{A}=\{ t_\alpha\,|\, \alpha \in \mathbb{F}\}$, if the actions of $A, \widetilde{A}$ and $\overline{A}$ on $\mathbb{F}^{\oplus m}$ are given respectively by
\begin{eqnarray*}
% \nonumber to remove numbering (before each equation)
  (x_1,\dots,x_m)s_\alpha &:=& (x_1\alpha^{q_1},\dots,x_m\alpha^{q_m}), \\
  (x_1,\dots,x_m)\sigma_\alpha &:=& (x_1\alpha^{q_\sigma(1)},\dots,x_m\alpha^{q_\sigma(m)}), \hbox{ and } \\
  (x_1,\dots,x_m)t_\alpha &:=& (x_1\alpha^{q_1p^{l_1}},\dots,x_m\alpha^{q_mp^{l_m}}), 
\end{eqnarray*}
where $q_i\in U(p^n-1)$, $l_i\in \{0,1,\dots,n-1 \}$ and $\sigma$ is a permutation in $S_m$, then $$(\mathbb{F}^{\oplus m},A)\cong (\mathbb{F}^{\oplus m},\widetilde{A}) \quad \hbox{ and } \quad  (\mathbb{F}^{\oplus m},A)\cong (\mathbb{F}^{\oplus m},\overline{A}).$$  In fact, the first isomorphism is derived by  $$\theta_1: \mathbb{F}^{\oplus m}\longrightarrow \mathbb{F}^{\oplus m}; (x_1,\dots,x_m)\mapsto (x_{\sigma(1)},\dots, x_{\sigma(m)}) \hbox{  and }\eta_1: A^\ast \longrightarrow \widetilde{A}^\ast; s_\alpha\mapsto \sigma_\alpha,$$ (see Lemma 3.6 in \cite{KHM}).  The second isomorphism is derived by $$\theta_2: \mathbb{F}^{\oplus m}\longrightarrow \mathbb{F}^{\oplus m}; (x_1,\dots,x_m)\mapsto (x^{p^{l_1}}_{1},\dots, x^{p^{l_m}}_{m}) \hbox{ and } \eta_2: A^\ast \longrightarrow \widetilde{A}^\ast; s_\alpha\mapsto t_\alpha,$$ (confront Lemma 3.7 in \cite{KHM}).  This motivates us to consider the group $G:=U(p^n-1)/\left< p  \right>$, where the operation  is the usual multiplication modulo $p^n-1$ and $\left< p \right>=\{1,p,\dots,p^{n-1} \}$.  By the above discussion, we can identify the action of  $A$ on $\mathbb{F}^{\oplus m}$ by a non-decreasing sequence  of length $m$ on $G$.  Precisely, if $A=\{s_\alpha \,|\, \alpha\in \mathbb{F} \}$ acts on $\mathbb{F}^{\oplus m}$ by $$(x_1,\dots,x_m)s_\alpha = (x_1\alpha^{q_1},\dots,x_m\alpha^{q_m}),$$ then we identify this action by the sequence $$(S):=(q_1,\dots,q_m).$$   A non-decreasing sequence $(S)=(q_1,\dots,q_m)$ in which $q_i$ is the smallest in the class $\overline{q}_i\in G$ for each $i=1,\dots,m$, is called a \emph{suitable sequence of length $m$} (confront Definition 2.3 in \cite{KHM}). \\
 
 Furthermore, if $q\in G$ and $(S)=(q_1,\dots,q_m)$ is a suitable sequence of length $m$ on $G$ and $A=\{s_\alpha \,|\, \alpha\in \mathbb{F} \}$ is identified by $(S)$, then $ (\mathbb{F}^{\oplus m},A)\cong (\mathbb{F}^{\oplus m},A')$, where $A'=\{s'_\alpha \,|\, \alpha\in \mathbb{F} \}$ is identified by $(S'):=q(S):=(qq_1,\dots,qq_m)$.  Here, the isomorphism is derived by $$\theta': \mathbb{F}^{\oplus m}\longrightarrow \mathbb{F}^{\oplus m}; (x_1,\dots, x_m)\mapsto (x_1,\dots, x_m) \hbox{  and }\eta': A^\ast \longrightarrow A'^\ast; s_\alpha\mapsto s'_{\alpha^{1/q}},$$
(see discussion before Theorem 3.9 in \cite{KHM}).  Therefore, to classify and count (up to the isomorphism) all near-vector spaces $\mathbb{F}^{\oplus m}$ over a finite filed $\mathbb{F}$, it is enough to deal with the set of all suitable sequences on $G$ of length $m$ with $1$ in the first position, and we denote this set by $St(1,m,G)$.  We also define an equivalent relation $\sim$ on the set $St(1,m,G)$ by,  for suitable sequences $(S_1)$ and $(S_2)$ in $St(1,m,G)$,
\begin{equation}\label{relationimp}
    (S_1)\sim (S_2) \Longleftrightarrow (\mathbb{F}^{\oplus m},A_1)\cong (\mathbb{F}^{\oplus m},A_2),
\end{equation}
where $A_1$ and $A_2$ are determined by $(S_1)$ and $(S_2)$ respectively.

\section{Classification of Near vector spaces}
Let $G:=U(p^n-1)/\left< p \right>$, where $\left< p \right>=\{1,p,\dots,p^{n-1}\}$.  We can represent the group $G$ explicitly as $G=\{1,q_2,\dots,q_{\frac{\phi(p^n-1)}{n}} \}\subseteq U(p^n-1)$, where $q_i$ is the smallest element in the coset of $\left< p \right>$ containing $q_i$.  So, the product of $q_i$ and $q_j$ in $G$ will be $q_k\in G$ whose its coset $\overline{q_k}:=q_k\left< p \right>$ contains the remainder of $q_iq_j$ divided by $p^n-1$.  We now rewrite Theorem 2.2 in \cite{KHM} as:
\begin{thm}\label{thmm22} [\cite{KHM}] Let $\mathbb{F}=\operatorname{GF}(p^n)$ and $q_i, q_j\in G$.  Then, $(\alpha^{q_i}+\beta^{q_i})^{q_j}=(\alpha^{q_j}+\beta^{q_j})^{q_i}$ for all $\alpha, \beta \in \mathbb{F}$ if and only if $q_i=q_j$.
\end{thm}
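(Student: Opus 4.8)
The plan is to prove the two implications separately, with essentially all the content in the forward direction. For the backward implication, observe that if $q_i=q_j$ as the chosen smallest representatives of elements of $G$, then the two sides of the displayed identity are literally the same expression, so there is nothing to prove. Thus I focus on showing that the functional identity forces $q_i$ and $q_j$ to lie in the same coset of $\left< p\right>$, i.e.\ $q_i=q_j$ in $G$.

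First I would exploit that $q_i\in U(p^n-1)$, so the power map $\alpha\mapsto\alpha^{q_i}$ is a bijection of $\mathbb{F}$ fixing $0$, with inverse $u\mapsto u^{r}$ where $rq_i\equiv 1\pmod{p^n-1}$. Substituting $\alpha=u^{r}$, $\beta=v^{r}$ (so that $u,v$ again range over all of $\mathbb{F}$) turns the inner sum on the left into $u+v$; then raising both sides to the power $r$, and using that exponents of nonzero elements matter only modulo $p^n-1$ while $0$ is fixed, collapses the identity to the single-exponent form
$$(u+v)^{s}=u^{s}+v^{s}\qquad\text{for all }u,v\in\mathbb{F},$$
where $s:=q_jq_i^{-1}\bmod(p^n-1)\in U(p^n-1)$. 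In other words, the hypothesis is equivalent to the assertion that the power map $x\mapsto x^{s}$ is \emph{additive} on $\mathbb{F}$.

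The core step is then the lemma that a power map $x\mapsto x^{s}$ is additive on $\mathbb{F}=\operatorname{GF}(p^n)$ if and only if $s\equiv p^{k}\pmod{p^n-1}$ for some $k\in\{0,\dots,n-1\}$. The ``if'' part is just Frobenius additivity. For the nontrivial direction I would first reduce $s$ to its residue $\bar s\in\{1,\dots,p^n-2\}$ (legitimate since $\gcd(s,p^n-1)=1$ forces $\bar s\neq 0$), so that $x\mapsto x^{s}$ and $x\mapsto x^{\bar s}$ coincide as functions. Expanding $(u+v)^{\bar s}-u^{\bar s}-v^{\bar s}$ by the binomial theorem leaves the monomials $u^{t}v^{\bar s-t}$ with $0<t<\bar s$; since $\bar s\le p^n-2$, every exponent occurring is at most $p^n-1$, so these are pairwise distinct reduced monomials and hence $\mathbb{F}$-linearly independent as functions $\mathbb{F}^2\to\mathbb{F}$. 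Vanishing of the function therefore forces each middle coefficient $\binom{\bar s}{t}$ to be $\equiv 0\pmod p$, and by the classical criterion (Kummer's theorem on carries, or Lucas) this holds for all $0<t<\bar s$ exactly when $\bar s$ is a power of $p$.

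Applying the lemma gives $q_jq_i^{-1}\equiv p^{k}\pmod{p^n-1}$, i.e.\ $q_j\equiv q_ip^{k}$, so $q_i$ and $q_j$ represent the same coset in $G=U(p^n-1)/\left< p\right>$; as they are the canonical smallest representatives, $q_i=q_j$, which finishes the argument. I expect the main obstacle to be the bookkeeping inside the lemma: ensuring the reduction modulo $p^n-1$ is clean so that the binomial monomials genuinely stay distinct (rather than accidentally collapsing under the $x^{p^n}=x$ relation), and pinning down the exact statement $\binom{\bar s}{t}\equiv 0$ for all intermediate $t\iff\bar s=p^{k}$. By contrast, the substitution reducing the two-exponent identity to the additive case is routine once the bijectivity of the $q_i$-power map is invoked.
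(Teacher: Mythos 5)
Your proof is correct. Note first that the paper itself offers no proof of this statement: it is imported verbatim (in rewritten form) from Theorem 2.2 of Howell--Meyer \cite{KHM}, so there is nothing in the present paper to compare line by line; your argument is essentially the standard one underlying the cited result. The two key moves are both sound: (i) since $q_i\in U(p^n-1)$, the substitution $\alpha=u^r,\beta=v^r$ with $rq_i\equiv 1 \pmod{p^n-1}$, followed by raising to the $r$-th power, reduces the two-exponent identity to additivity of $x\mapsto x^s$ with $s=q_jq_i^{-1}\bmod (p^n-1)$ (the case $u+v=0$ also goes through, since $x\mapsto x^{q_i}$ kills only $0$); and (ii) the characterization of additive power maps on $\operatorname{GF}(p^n)$ as exactly the Frobenius powers $x^{p^k}$, which you prove cleanly: because $\bar s\le p^n-2$, all bivariate monomials $u^tv^{\bar s-t}$ are reduced (each exponent $<p^n$), hence linearly independent as functions $\mathbb{F}^2\to\mathbb{F}$, so vanishing forces $\binom{\bar s}{t}\equiv 0\pmod p$ for all $0<t<\bar s$, and Lucas/Kummer then pins $\bar s$ to a power of $p$; since $p^n\equiv 1\pmod{p^n-1}$, the condition $s\in\{p^k \bmod (p^n-1)\}$ is precisely $s\in\left<p\right>$, i.e.\ $q_i$ and $q_j$ lie in the same coset, whence equality of the canonical smallest representatives. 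Your flagged worry about monomials collapsing under $x^{p^n}=x$ is exactly the right point to check, and your bound $\bar s\le p^n-2$ (legitimate because $\gcd(s,p^n-1)=1$ rules out $\bar s=0$) disposes of it; the only residual degenerate case, $p^n=2$, makes $G$ trivial and the claim vacuous.
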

This is a main tool used to prove the following result.  For a given suitable sequence  $(S)=(1,q_2,\dots,q_m)$, we denote $S:=\{ 1,\dots, q_N\}$ the order set (strictly increasing) of all distinct elements in $(S)$.
\begin{thm}\label{mainnewthm}
Let $(\mathbb{F}^{\oplus m}, A_1)$ and $(\mathbb{F}^{\oplus m}, A_2)$ be  near-vector spaces, where $A_1$ and $A_2$ are determined by suitable sequence
$(S_1)$ and $(S_2)$, respectively. Then $(\mathbb{F}^{\oplus m},A_1)\cong (\mathbb{F}^{\oplus m},A_2)$ if and only if
there is $q\in S_1$  such that  $S_1=qS_2$ and the occurrences of $qq'_j\in (S_1)$ and  $q'_j\in (S_2)$ are the same for each $j=1,\dots, N$, where $N=|S_1|=|S_2|$.
\end{thm}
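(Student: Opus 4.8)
The plan is to prove both directions by analyzing how an isomorphism must act on the \emph{quasi-kernel}, whose internal structure is dictated by Theorem~\ref{thmm22}. Throughout, write $(S_1)=(u_1,\dots,u_m)$ and let $S_1=\{d_1<\dots<d_N\}$, $S_2=\{q'_1<\dots<q'_N\}$ be the ordered sets of distinct values, with multiplicities $m_1,\dots,m_N$ and $m'_1,\dots,m'_N$; since both sequences are suitable, coordinates lying in the same $G$-class share the same smallest value. For the forward direction, suppose $\theta\colon\mathbb{F}^{\oplus m}\to\mathbb{F}^{\oplus m}$ and $\eta\colon A_1^\ast\to A_2^\ast$ realize the isomorphism as in Definition~\ref{defiso}. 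Because $s_\alpha\circ s_\beta=s_{\alpha\beta}$, the map $s_\alpha\mapsto\alpha$ identifies $A_1^\ast$ with the cyclic group $\mathbb{F}^\ast$, and similarly for $A_2^\ast$; hence $\eta$ corresponds to an automorphism of $\mathbb{F}^\ast$, so $\eta(s_\alpha)=t_{\alpha^q}$ for a unique $q\in U(p^n-1)$. This is exactly where the original slip $\eta(s_\alpha)=t_\alpha$ is corrected.

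Next I would describe $Q:=Q(\mathbb{F}^{\oplus m})$ explicitly. Letting $I_k$ be the set of coordinates whose value is $d_k$, membership $x\in Q$ requires, for any two coordinates $i,i'$ in the support of $x$, a single $\gamma$ with $\gamma^{u_i}=\alpha^{u_i}+\beta^{u_i}$ and $\gamma^{u_{i'}}=\alpha^{u_{i'}}+\beta^{u_{i'}}$; eliminating $\gamma$ yields $(\alpha^{u_i}+\beta^{u_i})^{u_{i'}}=(\alpha^{u_{i'}}+\beta^{u_{i'}})^{u_i}$ for all $\alpha,\beta$, which by Theorem~\ref{thmm22} holds iff $u_i=u_{i'}$ in $G$. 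Conversely any $x$ supported in a single block lies in $Q$, since $\gamma\mapsto\gamma^{d_k}$ is a bijection of $\mathbb{F}$. Thus $Q=\bigcup_k V_{I_k}$ is a union of coordinate subspaces, and a support argument shows the $V_{I_k}$ are precisely its maximal $\mathbb{F}_p$-subspaces. As $Q$ is defined purely from the near-vector-space structure and $\eta$ is a bijection of scalars, $\theta$ restricts to a bijection $Q\to Q$; being additive it carries maximal $\mathbb{F}_p$-subspaces to maximal ones, inducing a permutation $\tau$ with $\theta(V_{I_k})=V_{J_{\tau(k)}}$, which forces $N=|S_1|=|S_2|$ and $m_k=m'_{\tau(k)}$.

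On a block the scalar $s_\alpha$ acts as multiplication by $\alpha^{d_k}$ and $t_{\alpha^q}$ acts on the image block as multiplication by $\alpha^{q\,q'_{\tau(k)}}$, so the intertwining relation $\theta(x s_\alpha)=\theta(x)\,t_{\alpha^q}$ becomes, after the substitution $c=\alpha^{d_k}$, the identity $\theta(cx)=c^{\,r}\theta(x)$ with $r\equiv q\,q'_{\tau(k)}\,d_k^{-1}\pmod{p^n-1}$. Evaluating this at $(c+c')x=cx+c'x$ and invoking additivity of $\theta$ gives $(c+c')^{r}\theta(x)=\bigl(c^{r}+(c')^{r}\bigr)\theta(x)$; choosing $x$ with $\theta(x)\neq0$ (possible since $\theta$ is onto) yields $(c+c')^{r}=c^{r}+(c')^{r}$ for all $c,c'$, so $x\mapsto x^{r}$ is additive and $r\in\langle p\rangle$. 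Hence $d_k=q\,q'_{\tau(k)}$ in $G$ for every $k$. The block with $q'_{\tau(k)}=1$ shows $q\in S_1$, and reading these equalities over all $k$ gives $S_1=qS_2$ together with the equality of the multiplicity of $qq'_j$ in $(S_1)$ and of $q'_j$ in $(S_2)$ for each $j$. The converse is constructive: given such a $q$, the scaling isomorphism $(\theta',\eta')$ of the Preliminary section gives $(\mathbb{F}^{\oplus m},A_2)\cong(\mathbb{F}^{\oplus m},A_2')$, where $A_2'$ is determined by $q(S_2)$ computed entrywise in $G$; since multiplication by $q$ is a bijection of $G$, $q(S_2)$ has distinct values $qS_2=S_1$ and the multiplicity of $qq'_j$ in it equals that of $q'_j$ in $(S_2)$, which by hypothesis equals its multiplicity in $(S_1)$. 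Thus $q(S_2)$ and $(S_1)$ are the same multiset of smallest representatives, and sorting $q(S_2)$ by the permutation isomorphism $(\theta_1,\eta_1)$ returns exactly $(S_1)$, giving $(\mathbb{F}^{\oplus m},A_2')\cong(\mathbb{F}^{\oplus m},A_1)$.

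The main obstacle is the rigidity in the forward direction: showing that $\theta$ cannot mix coordinates across blocks and that the exponent $r$ is forced into $\langle p\rangle$. Both rest on Theorem~\ref{thmm22}, first to render the quasi-kernel a transparent union of coordinate subspaces (so additivity alone pins down the block permutation $\tau$ and matches multiplicities), and then on the elementary but decisive fact that an additive bijection with $\theta(cx)=c^{r}\theta(x)$ can exist only when $x\mapsto x^{r}$ is additive, i.e. $r$ is a Frobenius power. Everything else—the identification of $\eta$ and the entire converse—is routine assembly of the isomorphisms already recorded.
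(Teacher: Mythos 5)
Your proposal is correct, and its forward direction takes a genuinely different route from the paper's. The paper argues coordinatewise: it evaluates $\theta$ on the standard basis vectors $e_i$, uses the intertwining relation and additivity at the minimal nonzero coordinate $\omega_{k,r}$ of $\theta(e_k)$ to extract the additivity identity (\ref{eq 32}), invokes Theorem~\ref{thmm22} to force $q_\circ q'_r/q_k\in\left<p\right>$ and to annihilate the coordinates with $qq'_j\neq q_k$, and then matches multiplicities by a linear-independence argument (if the target block were shorter, the images $\theta(e_k),\dots,\theta(e_{k+l_{1k}})$ would be linearly dependent, contradicting injectivity) together with the counting identities (\ref{eqq1}) and (\ref{eqq2}). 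You instead let the quasi-kernel do this bookkeeping: since $Q(V)$ is intrinsic and preserved by any isomorphism (your verification via $\widehat{\eta}$ is sound), and since Theorem~\ref{thmm22} exhibits it as the union of the block coordinate subspaces $V_{I_k}$ whose maximal subgroups are precisely the blocks (your support argument correctly rules out subgroups meeting two blocks), $\theta$ must permute blocks, which yields $N_1=N_2$ and the multiplicity matching in one stroke; the relation $d_k=qq'_{\tau(k)}$ in $G$ then follows from the same additive-power-map rigidity, applied blockwise after the substitution $c=\alpha^{d_k}$. Your identification $\eta(s_\alpha)=t_{\alpha^q}$ --- the fix of the slip --- is the same conclusion as the paper's, obtained slightly more slickly from the automorphism group of the cyclic group $A_1^\ast\cong\mathbb{F}^\ast$ rather than the paper's order computation for $a$ and $b$. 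For the converse, the paper builds $(\theta,\eta)$ explicitly in one shot (a permutation $\sigma$ combined with Frobenius exponents $s_i\in\left<p\right>$ and a direct verification), whereas you compose the three isomorphisms recorded in the preliminaries; the only point worth making explicit there is that ``$q(S_2)$ computed entrywise in $G$'' tacitly composes the scaling isomorphism $(\theta',\eta')$, which produces exponents $qq'_i \bmod (p^n-1)$ that need not be smallest class representatives, with the Frobenius isomorphism $(\theta_2,\eta_2)$ to reduce them --- a harmless but necessary step. On balance, your route buys conceptual clarity and reusability (the block structure of $Q$ pins down everything additively, without choosing bases or minimal coordinates), while the paper's is more elementary and self-contained, needing no facts about preservation of the quasi-kernel.
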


\begin{proof}   Suppose that $(\mathbb{F}^{\oplus m},A_1)\cong (\mathbb{F}^{\oplus m},A_2)$.  So, there must exist group isomorphisms
  $\theta:\mathbb{F}^{\oplus m}	\longrightarrow \mathbb{F}^{\oplus m}$  and $\eta:A^{\ast}_{1}\longrightarrow	A^{\ast}_{2}$ such that
  \begin{equation*}
 \theta((x_1, x_2 , . . . , x_m)s_\alpha)	 =\theta(x_1,x_2, . . . ,x_m)\eta(s_\alpha).
\end{equation*} 
Suppose $(S_1)=(1=q_1,q_2, \dots ,q_m)$ and $(S_2)=(1=q'_1,q^{\prime}_{2}, \dots ,q^{\prime}_{m})$.  Then the actions of $A_1, A_2$ on $\mathbb{F}^{\oplus m}$ are explicitly given by
  $$ (x_1,x_2, . . . ,x_m) s_\alpha = (x_1\alpha^{q_1},x_2\alpha^{q_2}, . . . ,x_m\alpha^{q_m}) ,$$  
   $$ (x_1,x_2, . . . ,x_m) t_\gamma = (x_1\gamma^{q^{\prime}_{1}},x_2\gamma^{q^{\prime}_{2}}, . . . ,x_m\gamma^{q^{\prime}_{m}}).$$
for each $\alpha, \gamma \in \mathbb{F}$ and each $(x_1,x_2,\dots,x_m)\in \mathbb{F}^{\oplus m}$. 

Since $\mathbb{F}^*$ is a cyclic group, $\mathbb{F}^*=\left< a \right>$ for some $a\in \mathbb{F}^*$.   Suppose  $\eta(s_a)=t_b$, for some $b\in \mathbb{F}^*=\left< a \right>$. Thus, there exists an integer $1\leq q_\circ < p^n-1$ such that $b=a^{q_\circ}$. Since $\eta$ is an isomorphism, we have $\operatorname{ord}(s_a)=\operatorname{ord}(t_b):=k$.  So, $$(1,1,\dots,1)=(1,1,\dots,1)(t_b)^k=(1,1,\dots,1)t_{b^k}=(b^{kq'_1},b^{kq'_2},\dots,b^{kq'_m}),$$
 and then $b^k=1$ (because $\gcd (q'_i,p^{n}-1)=1$).  If $\operatorname{ord}({b})=l<k$, then $$(t_b)^l=t_{b^l}=t_1=id.$$  This implies that $\operatorname{ord}(t_b)\leq l <k$, which is a contradiction.  So, $\operatorname{ord}({b})=k$.  Similarly, $\operatorname{ord}({a})=k$ and thus $k=|\mathbb{F}^*|=p^n-1$.  Since $b=a^{q_\circ}$ and $a,b$ have the same orders,  $\gcd(q_\circ,p^n-1)=1$; namely, $q_\circ\in U(p^n-1)$.   Therefore, for the given isomorphism $\eta$, there must exist $q_\circ\in U(p^n-1)$ such that, for each non zero $\alpha(=a^{r})$ in $\mathbb{F}$,
 $$ \eta(s_\alpha)=\eta(s_{a^{r}})=(\eta(s_a))^{r}=(t_{a^{q_\circ}})^r=t_{(a^{q_\circ})^r}=t_{\alpha^{q_\circ}}. $$

  Let $S_1=\{1,\dots, q_{N_1} \}\subseteq G$ be the order set (strictly increasing) of all elements in the sequence $(S_1)$.   Suppose that, for each $i=1,\dots,N_1$, the occurrence of $q_i$ in $(S_1)$ is $l_{1i}$.  Then $l_{1i}\geq 1$ for each $i=1,\dots,N_1$ and
  \begin{equation}\label{eqq1}
  l_{11}+\cdots+l_{1N_1}=m.
  \end{equation}

  Now, for each $1\leq k \leq N_1$, consider the constant subsequence  $(q_k,\dots, q_{k})$ (length $l_{1k}$) of $(S_1)$.   Let $e_i= (0,\dots , 1, \dots, 0)$, with $1$ in position $i$, and zeros elsewhere, $1\leqslant i\leqslant m.$ Suppose		
$$\theta(e_i ) = (\omega_{i,1}, \dots ,\omega_{i,m})$$																	
for some $\omega_{i, j}\in \mathbb{F} , k\leqslant i\leqslant k + l_{1k}, 1\leqslant j\leqslant m.$ Also, for $\alpha \in \mathbb{F}$, we have
\begin{equation*}
\theta(e_{i}s_{\alpha})=\theta(0, \dots ,\alpha^{q_i}, \dots,	0),
\end{equation*}
\begin{equation*}
\theta(e_{i})\eta(s_{\alpha})= (\omega_{i,1}, \dots , \omega_{i,m} )t_{\alpha^{q_\circ}}=(\omega_{i,1}\alpha^{q_\circ q^{\prime}_1} , \dots ,\omega_{i,m}\alpha^{q_\circ q^{\prime}_m})
\end{equation*}
and thus 
$$\theta(0, \dots ,\alpha^{q_i}, \dots,	0)=\theta(e_{i}s_{\alpha})=(\omega_{i,1}{\alpha^{q_\circ q^{\prime}_1}} , \dots,\omega_{i,m}{\alpha^{q_\circ q^{\prime}_m}})$$
for $k\leqslant i\leqslant k + l$, and $\alpha^{q_i}$ in the $i^{\operatorname{th}}$ position. Hence, with $ \alpha$ in the $k^{\operatorname{th}}$ position, 
$$\theta(0, \dots, \alpha, \dots , 0)=\theta(e_{k}s_{{\alpha}^{1/q_k}}) =  (\omega_{k,1}{{\alpha}^{q_\circ q^{\prime}_{1} /q_k}} , \dots , \omega_{k,m}{{\alpha}^{q_\circ q^{\prime}_{m} /q_k}}) .$$	
Consequently, for $\alpha,\beta \in \mathbb{F},$	
$$\theta(0, \dots, \alpha+\beta, \dots , 0)=\theta(e_{k}s_{(\alpha+\beta)^{1/q_k}} )=(\omega_{k,1}{{(\alpha+\beta)}^{q_\circ q^{\prime}_{1}/q_k}} , \dots,\omega_{k,m}{{(\alpha+\beta)}^{q_\circ q^{\prime}_{m} /q_k}}).$$
We also have
\begin{align*}
\theta(0, \dots, \alpha+\beta, \dots , 0) &= \theta(e_{k} s_{{\alpha}^{1/q_k}} ) + \theta(e_{k}s_{{\beta}^{1/q_k}} )\\
&=(\omega_{k,1}{{\alpha}^{q_\circ q^{\prime}_{1}/q_k}} , \dots,\omega_{k,m}{{\alpha}^{q_\circ q^{\prime}_{m} /q_k}}) + (\omega_{k,1}{{\beta}^{q_\circ q^{\prime}_{1}/q_k}} , \dots, \omega_{k,m}{{\beta}^{q_\circ q^{\prime}_{m} /q_k}})\\
&=(\omega_{k,1}{({\alpha}^{q_\circ q^{\prime}_{1}/q_k}+{\beta}^{q_\circ q^{\prime}_{1}/q_k})} , \dots , \omega_{k,m}{({\alpha}^{q_\circ q^{\prime}_{m} /q_k}+{\beta}^{q_\circ q^{\prime}_{m} /q_k})}).
\end{align*}		
Since $e_k$ is non-zero, at least one of $\omega_{k,1}, \dots , \omega_{k,m}$ is non-zero, say $\omega_{k,r}\neq 0,$ where $r$ is minimal with respect to this property. Then, we have
$$\omega_{k,r}(\alpha^{q_\circ q^{\prime}_r /q_k}+\beta^{q_\circ q^{\prime}_r /q_k}) = \omega_{k,r}(\alpha +\beta)^{q_\circ q^{\prime}_{r}/q_k}.$$
Thus, 
\begin{equation}\label{eq 32}
    \alpha^{q_\circ q^{\prime}_r /q_k}+\beta^{q_\circ q^{\prime}_r /q_k}= (\alpha +\beta)^{q_\circ q^{\prime}_{r}/q_k}
\end{equation}
for all $\alpha,\beta\in \mathbb{F}.$ 

By Theorem \ref{thmm22}, the equation (\ref{eq 32}) happens if and only if $\frac{q_\circ q'_r}{q_k}\in \overline{1}=\left< p \right>$; equivalently, if and only if $qq'_r=q_k$, where $q\in G$ such that $q_\circ\in q\left< p \right>$.  This also implies that $\omega_{k, j} = 0$ if $qq^{\prime}_j\neq  q_{k}$, for each $j=1,\dots,m$.  Assume that $(q^{\prime}_{r},\dots ,q^{\prime}_{r+l^{\prime}})$ is the constant subsequence of maximal length of the sequence $(S_2)$ and satisfies
$q_k = qq^{\prime}_{r}= \cdots =qq^{\prime}_{r+l^{\prime}}$. Then
$$\theta(e_i)=(0, \dots , 0, \omega_{i,r} , \dots , \omega_{i,r+l^{\prime}} , 0, \dots , 0),$$	
for each $i=k,\dots,k+l_{1k}$.  If $l'<l_{1k}$, then $\{ \theta(e_k),\dots,\theta(e_{k+l_{1k}})\}$ is a linearly dependent set in the vector space $\mathbb{F}^{\oplus m}$ over $\mathbb{F}$.  So, there exists $\alpha^{q_\circ}_0,\dots,\alpha^{q_\circ}_{l_{1k}}\in \mathbb{F}$, not all zero, such that
$$0=\sum_{i=0}^{l_{1k}} \theta(e_{k+i})t_{\alpha^{q_\circ}_i} =\sum_{i=0}^{l_{1k}} \theta(e_{k+i}s_{\alpha_i})=\theta \left(\sum_{i=0}^{l_{1k}}e_{k+i}s_{\alpha_i}\right).$$
Then, $0=\sum_{i=0}^{l_{1k}}e_{k+i}s_{\alpha_i}=\sum_{i=0}^{l_{1k}}v_i$, where $v_i= (0,\dots,0,\alpha_i^{q_{k}},0,\dots,0)$, with $\alpha_i^{q_{k}}$ in position $k+i$ and zero elsewhere, $0\leq i\leq l_{1k}$.  This is a contradiction because $\{v_0,\dots,v_{l_{1k}}\}$ is a linearly independent set in the vector space $\mathbb{F}^{\oplus m}$ over $\mathbb{F}$.  Hence $l'\geq l_{1k}$.

Now, let $S_2=\{q'_1,\dots, q'_{N_2} \}\subseteq G$ be the order set (strictly increasing) of all elements in the sequence $(S_2)$.   Suppose also that, for each $j=1,\dots,N_2$, the occurrence of $q'_j$ in $(S_2)$ is $l'_{2j}$.  Then $l'_{2j}\geq 1$ for all $j=1,\dots,N_2$ and
 \begin{equation}\label{eqq2}
l'_{21}+\cdots+l'_{2N_2}=m.
  \end{equation}
Therefore, for each $q_k\in S_1$, there exist $q'_r\in S_2$ such that $qq'_r=q_k$; namely, $S_1\subseteq qS_2$ and $l_{1k}\leq l'_{2r}$.  However, by (\ref{eqq1}) and (\ref{eqq2}), we conclude that $S_1=qS_2$ and the occurrence of $q_k\in (S_1)$ is the same as occurrence of $q^{-1}q_k=q'_r\in (S_2)$.  Moreover, since $1\in S_1\cap S_2$,  $q\in S_1$ and $q^{-1}\in S_2$. \\

Conversely, we assume the assumptions and then define $\eta:A^{\ast}_{1}\longrightarrow A^{\ast}_{2}$ by $\eta(s_\alpha)=t_{\alpha^{q}}$, for each $\alpha\in \mathbb{F}$.
For  $S_1=\{ 1,\dots, q_N\}$ and $S_2=\{1, \dots,q'_N \}$, let $l_j$ and $ l'_j$ be the occurrences of $q_j\in (S_1)$ and $q'_j\in (S_2)$, respectively.  Define a permutation $\rho:\{1, . . . ,N\}\longrightarrow \{1, . . . ,N\}$ by setting $\rho(j):=n_j$ if $qq^{\prime}_{j}= q_{n_j} $, for each $j=1,\dots,q_N$.  Now, for each $k=1,\dots, N$, we set $$m_{\rho(k)}:=\sum_{j< \rho(k)}l_j \quad \hbox{ and } \quad m'_k:=\sum_{j=1}^k l'_j,$$ and set $m_1=0=m'_0$.  For each $i\in \{ 1,\dots,m \}$ such $m'_{k-1}< i \leq m'_k$, we define $$\sigma(i):=m_{\rho(k)}+j,$$
where $i=m'_{k-1}+j$.  Then $\sigma:\{1, \dots,m\}\longrightarrow \{1, \dots ,m\}$ is a permutation satisfying the property that $qq^{\prime}_{i}= q_{\sigma(i)} $, for all $i\in \{ 1,\dots, m\}$.  By setting $s_i:= qq'_i/q_{\sigma(i)} \operatorname{mod}(p^n-1)$ for each $i=1,\dots, m$, we see that $s_i\in \overline{1}=\left< p \right>$.  So, the function $\theta:\mathbb{F}^{\oplus m} \longrightarrow \mathbb{F}^{\oplus m}$ defined by $$\theta(x_1,x_2, \dots ,x_m)=(x_{\sigma(1)}^{s_1},x_{\sigma(2)}^{s_2}, \dots ,x_{\sigma(m)}^{s_m})$$ is a group isomorphism, by Theorem \ref{thmm22}.  Moreover, for $\alpha\in \mathbb{F}$ and $(x_1,x_2\dots,x_m)\in \mathbb{F}^{\oplus m}$,
\begin{align*}
\theta((x_1,x_2, \dots ,x_m)s_\alpha) &= \theta(x_1\alpha^{q_1},x_2\alpha^{q_2}, \dots ,x_m\alpha^{q_m}) \\
&=((x_{\sigma(1)}\alpha^{q_{\sigma(1)}})^{s_1},(x_{\sigma(2)}\alpha^{q_{\sigma(2)}})^{s_2}, \dots ,(x_{\sigma(m)}\alpha^{q_{\sigma(m)}})^{s_m})\\
&=(x_{\sigma(1)}^{s_1}\alpha^{q_{\sigma(1)}s_1},x_{\sigma(2)}^{s_2}\alpha^{q_{\sigma(2)}s_2}, \dots ,x_{\sigma(m)}^{s_m}\alpha^{q_{\sigma(m)}s_m})\\
&=(x_{\sigma(1)}^{s_1}\alpha^{qq^{\prime}_{1}},x_{\sigma(2)}^{s_2}\alpha^{qq^{\prime}_{2}}, \dots,x_{\sigma(m)}^{s_m}\alpha^{qq^{\prime}_{m}})\\
&=(x_{\sigma(1)}^{s_1},x_{\sigma(2)}^{s_2}, \dots ,x_{\sigma(m)}^{s_m})t_{\alpha^q}\\
&=\theta(x_1,x_2, \dots ,x_m)\eta(s_\alpha)
\end{align*}		
Therefore $(\mathbb{F}^{\oplus m},A_1)\cong (\mathbb{F}^{\oplus m},A_2)$.
\end{proof}

\section{Numbers of Near-vector spaces}
In this section, we also denote $G$ by $U(p^n-1)/ \left<p\right>$ and $1$ by the identity of $G$ as in the previous section.  The number (up to the isomorphism) of near-vector spaces $\mathbb{F}^{\oplus m}$ over a finite filed $\mathbb{F}$ is based on the number (up to the relation $\sim$ in (\ref{relationimp}) ) of equivalent classes in $St(1,m,G)$.  Let $S$ be the set of all distinct elements in a suitable sequence $(S)$ on $G$.  It turns out, by Theorem \ref{mainnewthm}, that if $|S_1|\neq |S_2|$, then $(S_1) \nsim (S_2)$.  Thus, the total number of near-vector spaces up to the isomorphism is $$\sum_{i=1}^{\min(m,|G|)} T(i),$$ where $T(i)$ denote the number  (up to $\sim$) of suitable sequences $(S)$ of length $m$ with $|S|=i$. \\

For $i=1$, it is clear that $T(1)=1$.  We now consider $T(N)$ for $2 \leq N \leq \min(m, |G|)$.   Let $St(1,m,N)$ denote the set of all possible distinct ($\neq$) suitable sequences $(S)$ having length $m$, $|S|=N$ and $1$ in the first position.  By basic combinatorics, there are $$\left(
                                                                                                                                            \begin{array}{c}
                                                                                                                                              |G|-1 \\
                                                                                                                                              N-1 \\
                                                                                                                                            \end{array}
                                                                                                                                          \right) \left(
                                                                                                                                            \begin{array}{c}
                                                                                                                                              m-1 \\
                                                                                                                                              N-1 \\
                                                                                                                                            \end{array}
                                                                                                                                          \right):=t_N
$$ suitable sequences in $St(1,m,N)$.   For a suitable sequence $(S)\in St(1,m,N)$ having the set $S=\{1=q_1, q_2,\dots,q_N \}$, by Theorem \ref{mainnewthm}, its equivalent class can be explicit as $$[(S)]:=\{ (S),q^{-1}_2(S),\dots,q^{-1}_N(S)\}.$$  Here, $q^{-1}_j(S)$ means the suitable sequence obtained from $(S)$ by multiplying elements in each position of the sequence by $q^{-1}_j$ and then rearrange their entry in non-decreasing order.  If each class in $ St(1,m,N)/\sim$ contains $N$ elements, then $T(N)=(t_N)/N$.  However, it can happen that $|[(S)]|<N$; that is $q^{-1}_i(S)=q^{-1}_j(S)$, for some $1\leq i\neq j \leq N$ or equivalently $q(S)=(S)$ for some $q\in S$ ($q=q_iq^{-1}_j\in S$, because $1\in S$).  Precisely, we have:
\begin{prop} \label{prop41}For a suitable sequence $(S)\in St(1,m,N)$,  $|[(S)]|<N$ if and only if
 \begin{enumerate}
   \item  $S$ is a disjoint union of cosets in $G/H$ such $H\subseteq S$, for some non-trivial subgroup $H$ of $G$ in which $|H|$ is a common factor of $m,N, |G|$, and
   \item Elements in $(S)$ coming from the same cosets have the same occurrences.
 \end{enumerate}
\end{prop}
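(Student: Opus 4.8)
The plan is to read the condition $|[(S)]|<N$ as saying that the multiset $(S)$ has a non-trivial stabiliser under the entrywise multiplication action of $G$, and then to match that stabiliser with the subgroup $H$ of conditions (1)--(2). For $q\in G$ write $q(S)$ for the suitable sequence obtained by multiplying each entry of $(S)$ by $q$ and reordering; regarding $(S)$ as the set $S=\{1,q_2,\dots,q_N\}$ equipped with its occurrence function $q_k\mapsto l_k$, the equality $q(S)=(S)$ means \emph{both} $qS=S$ and $\operatorname{occ}(qq_k)=\operatorname{occ}(q_k)$ for every $k$. Put $H:=\{q\in G: q(S)=(S)\}$. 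The first step is to note, from the discussion immediately preceding the statement, that $|[(S)]|<N$ holds exactly when $q^{-1}_i(S)=q^{-1}_j(S)$ for some $i\neq j$, i.e.\ when $q(S)=(S)$ for some $q\in S$ with $q\neq1$; since any such $q$ lies in $H$ and, conversely, every $h\in H\setminus\{1\}$ lies in $S$ (because $1\in S$ and $hS=S$ force $h\in S$), this is precisely the assertion $H\neq\{1\}$.

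I would then verify that $H$ is a subgroup of $G$: it contains $1$, it is closed under products since $g(S)=g'(S)=(S)$ gives $(gg')(S)=(S)$, and it is closed under inverses because $gS=S$ yields $g^{-1}S=S$, while occurrence-preservation for $g$ transfers to $g^{-1}$ upon substituting $g^{-1}q_k$ for $q_k$. This bookkeeping with occurrences, rather than with the mere set equality $qS=S$, is the part I expect to require the most care; it is exactly what condition (2) is designed to record.

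For the forward implication, assume $|[(S)]|<N$, so $H\neq\{1\}$, and take $H$ itself as the subgroup in the statement. From $1\in S$ and $hS=S$ for all $h\in H$ we get $H=H\cdot1\subseteq S$, and for every $q\in S$ the full coset $Hq$ lies in $S$ (as $hq\in hS=S$), so $S$ is a disjoint union of $H$-cosets; this is (1). Condition (2) is immediate from the definition of $H$, since $\operatorname{occ}(hq)=\operatorname{occ}(q)$ for all $h\in H$. The divisibility claims then follow mechanically: $|H|\mid|G|$ by Lagrange, $|H|\mid N$ because $S$ is a union of cosets each of size $|H|$, and $|H|\mid m$ because constancy of occurrences on cosets gives $m=\sum_k l_k=|H|\sum_{\text{cosets}}l_{\text{coset}}$.

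For the converse, assume (1) and (2) and exhibit a non-trivial stabilising element directly: choose $h\in H\setminus\{1\}$, note that $hS=S$ by (1) (as $h$ fixes each $H$-coset setwise) and $\operatorname{occ}(hq)=\operatorname{occ}(q)$ by (2), so $h(S)=(S)$; since $h\in H\subseteq S$ and $h\neq1$, the first step gives $|[(S)]|<N$. No further appeal to Theorem \ref{thmm22} is needed here, as the reduction of the isomorphism problem to the multiplication action of $G$ on suitable sequences has already been established before the statement; the present argument is purely the computation of the stabiliser of $(S)$ inside $G$.
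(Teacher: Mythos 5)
Your proposal is correct and follows essentially the same route as the paper's proof: both reduce the condition $|[(S)]|<N$ to the existence of a non-trivial $q\in S$ with $q(S)=(S)$ (using that $1\in S$ forces any stabilizing element into $S$), and both then obtain conditions (1)--(2) and the divisibility of $m$, $N$, $|G|$ by grouping $S$ into cosets with constant occurrences. The only difference is packaging: you take $H$ to be the full stabilizer of $(S)$ under the entrywise $G$-action and verify it is a subgroup, whereas the paper works with the cyclic subgroup $\langle q\rangle$ of a single stabilizing element and peels off cosets iteratively --- both yield the proposition's subgroup with the same properties.
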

\begin{proof}
 Suppose $q(S)=(S)$, for some $q\in S$.  Then, $q^kS=S$, for all $k\in \mathbb{N}$.  So,  $\left<q\right>\subseteq S$  and $q^N=1$.  Now, we can write $S=\{1, q, q^2,\dots,q^{r-1},q_{r+1}\dots,q_N \}=\left< q\right>\sqcup \{q_{r+1},\dots,q_N \}$ (disjoint union), where $r=\operatorname{ord}(q)$.  By using the fact that  $q^kS=S$, for all $k\in \mathbb{N}$ again, we can write $S=\left< q\right>\sqcup q_{r+1}\left< q\right> \sqcup \{q_{2r+1},\dots,q_N \}$.  After continuously repeating this process, we conclude that $S$ is a union of cosets in $ G/\left<q\right>$.   Moreover, since the occurrences for $qq'$ and $q'$ in $(S)$ must be equal for each $q'\in S$, elements in $S$ coming from the same coset in $ G/\left<q\right>$ must have the same occurrences.   This also implies that $m$ is divisible by $|\left<q\right>|$.

 On the other hand, suppose $H$ is a non trivial subgroup of $G$ in which its order divides $\operatorname{gcd}(m,N, |G|)$.  Let $S$ be a disjoint union of $N/|H|$-elements in $G/H$ and elements in $(S)$ coming from the same cosets have the same occurrences.  Thus, $q^{-1}(S)=(S)$, for all $q\in H$ and thus $[(S)]$ contains at most $N/|H|$ sequences.
\end{proof}
An immediate consequence of this proposition is as follow.
\begin{cor} If $\operatorname{gcd}(m,N, |G|)=1$, then $T(N)=(t_N)/N$.  In particular, if $\operatorname{gcd}(m, |G|)=1$, then the total number of near-vector space up to the isomorphism is $$\sum_{N=1}^{\min(m,|G|)} (t_N)/N$$ and
$$\left(
                                                                                                                                            \begin{array}{c}
                                                                                                                                              |G|-1 \\
                                                                                                                                              N-1 \\
                                                                                                                                            \end{array}
                                                                                                                                          \right) \left(
                                                                                                                                            \begin{array}{c}
                                                                                                                                              m-1 \\
                                                                                                                                              N-1 \\
                                                                                                                                            \end{array}
                                                                                                                                          \right)=t_N
$$ is divisible by $N$ for each $N=1,2,\dots,\min(m,|G|)$.
\end{cor}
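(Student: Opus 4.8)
The plan is to read off both assertions directly from Proposition \ref{prop41}, since $T(N)$ is determined entirely by the sizes of the equivalence classes in $St(1,m,N)/\!\sim$. First I would record the elementary upper bound $|[(S)]|\le N$ valid for every suitable sequence $(S)\in St(1,m,N)$: this is immediate from the explicit description $[(S)]=\{(S),q_2^{-1}(S),\dots,q_N^{-1}(S)\}$ given just before the proposition, so a class has at most $N$ members, with equality precisely when these $N$ representatives are pairwise distinct. Thus proving $T(N)=t_N/N$ amounts to showing that, under the gcd hypothesis, no class can be strictly smaller than $N$.

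Next, assuming $\gcd(m,N,|G|)=1$, I would apply the forward implication of Proposition \ref{prop41} in contrapositive form. If some class satisfied $|[(S)]|<N$, then condition (1) of the proposition would supply a non-trivial subgroup $H\le G$ whose order is a common factor of $m$, $N$ and $|G|$; but then $|H|$ would divide $\gcd(m,N,|G|)=1$, forcing $|H|=1$ and contradicting non-triviality. Hence $|[(S)]|=N$ for every $(S)\in St(1,m,N)$. Since $\sim$ restricted to $St(1,m,N)$ partitions these $t_N$ sequences into classes all of the common size $N$, a direct count gives $T(N)=t_N/N$, the first claim. Because $T(N)$ counts equivalence classes it is a non-negative integer, so the same identity simultaneously establishes $N\mid t_N$.

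For the ``in particular'' part I would note that $\gcd(m,N,|G|)$ divides $\gcd(m,|G|)$, so the hypothesis $\gcd(m,|G|)=1$ forces $\gcd(m,N,|G|)=1$ for every $N$ in the range $1\le N\le\min(m,|G|)$; the boundary case $N=1$ is consistent since $T(1)=1=t_1$. Applying the first part to each such $N$ yields $T(N)=t_N/N$, whence the total count $\sum_{N=1}^{\min(m,|G|)}T(N)$ equals $\sum_{N=1}^{\min(m,|G|)}t_N/N$, and the divisibility of $t_N=\binom{|G|-1}{N-1}\binom{m-1}{N-1}$ by $N$ follows for each such $N$. I do not expect a genuine obstacle: the argument is a short deduction from Proposition \ref{prop41}, and the only point demanding a little care is confirming that the classes really have uniform size $N$ once the subgroup $H$ of condition (1) has been ruled out, so that the partition into equal blocks—and hence the clean division $t_N/N$—is legitimate.
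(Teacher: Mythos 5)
Your proposal is correct and is essentially the paper's own argument: the paper gives no separate proof, presenting the corollary as an immediate consequence of Proposition \ref{prop41}, and your contrapositive reading (the hypothesis $\gcd(m,N,|G|)=1$ rules out any non-trivial subgroup $H$ with $|H|$ dividing this gcd, so every class in $St(1,m,N)$ has exactly $N$ members and $T(N)=t_N/N$, an integer) is precisely that intended deduction. The ``in particular'' step, observing that $\gcd(m,|G|)=1$ forces $\gcd(m,N,|G|)=1$ for every $N$ in range, likewise matches the paper's implicit reasoning.
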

For $\emptyset\neq H\leq G$ such that $|H|$ is a divisor of $\operatorname{gcd}(m,N,|G|)$, we denote $St(H,m,N)$ the set of all suitable sequences $(S)$ of length $m$ satisfying (1) and (2) in Proposition \ref{prop41}.
  The proof of this proposition also asserts that if $|[(S)]|<N$, then this class contains at most $N/|H|$ sequences.  In fact, we have:
\begin{prop}\label{prop43} Each $[(S)]$, where $(S)\in St(H,m,N)$, contains exactly $N/|H|$ sequences if and only if $H\nleq K$ for any subgroup $K\neq H$ of $G$ such that  $|K|$ is a divisor of $\operatorname{gcd}(m,N,|G|)$.
\end{prop}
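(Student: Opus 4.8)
The plan is to reduce the statement to a single size formula for the class $[(S)]$ in terms of a stabilizer subgroup. For a suitable sequence $(S)\in St(H,m,N)$ I would introduce
$$\operatorname{Stab}(S):=\{q\in G : q(S)=(S)\},$$
where $q(S)$ denotes the suitable sequence obtained by multiplying every entry of $(S)$ by $q$ and reordering (so $q(S)=(S)$ means the underlying multisets agree). Since $G$ is finite and abelian, $\operatorname{Stab}(S)$ is a subgroup of $G$. The first key step is the identity
$$|[(S)]|=\frac{N}{|\operatorname{Stab}(S)|}.$$
To prove it, recall from Theorem \ref{mainnewthm} that $[(S)]=\{q^{-1}(S):q\in S\}$, and observe that two sequences $q_i^{-1}(S)$ and $q_j^{-1}(S)$ coincide exactly when $q_jq_i^{-1}\in\operatorname{Stab}(S)$, i.e. when $q_i,q_j$ lie in the same coset of $\operatorname{Stab}(S)$. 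Because $\operatorname{Stab}(S)$ acts on $S$ (if $q(S)=(S)$ and $s\in S$ then $qs\in S$), the set $S$ is a disjoint union of $\operatorname{Stab}(S)$-cosets, so the number of distinct cosets it meets is $|S|/|\operatorname{Stab}(S)|=N/|\operatorname{Stab}(S)|$.

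Next I would record two facts about $L:=\operatorname{Stab}(S)$. First, $H\leq L$: conditions (1) and (2) defining $St(H,m,N)$ say precisely that $S$ is a union of $H$-cosets with occurrences constant on each $H$-coset, which is exactly the assertion that every $h\in H$ fixes $(S)$. Second, $|L|$ divides $\gcd(m,N,|G|)$: Lagrange gives $|L|\mid|G|$, the decomposition of $S$ into full $L$-cosets gives $|L|\mid|S|=N$, and constancy of occurrences along each $L$-coset gives $|L|\mid m$. In particular $(S)\in St(L,m,N)$, and combined with the size formula we get that $|[(S)]|=N/|H|$ holds if and only if $L=H$. This reduces the proposition to the equivalence: $\operatorname{Stab}(S)=H$ for every $(S)\in St(H,m,N)$ if and only if $H\nleq K$ for every subgroup $K\neq H$ with $|K|\mid\gcd(m,N,|G|)$.

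Both directions are then short. For the ``if'' direction, assume $H\nleq K$ for every such $K$. Given any $(S)\in St(H,m,N)$, the subgroup $L=\operatorname{Stab}(S)$ satisfies $H\leq L$ and $|L|\mid\gcd(m,N,|G|)$ by the paragraph above; if $L\neq H$ we could take $K=L$ to contradict the hypothesis, so $L=H$ and $|[(S)]|=N/|H|$. For the ``only if'' direction I argue contrapositively: given $K\neq H$ with $H\leq K$ and $|K|\mid\gcd(m,N,|G|)$, I would construct a witness $(S)\in St(H,m,N)$ with class strictly smaller than $N/|H|$. Since $|K|$ divides $N$ and $|G|$ and $N\leq|G|$, I can pick $N/|K|$ distinct cosets of $K$ including $K$ itself (so $H\subseteq K\subseteq S$) and let $S$ be their union; since $|K|\mid m$ and $N\leq m$, I assign positive occurrences constant on each $K$-coset and summing to $m$ (all ones except one coset absorbing the surplus). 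As every $K$-coset is a union of $H$-cosets, this $(S)$ lies in $St(H,m,N)$, while $K\leq\operatorname{Stab}(S)$ forces $|[(S)]|=N/|\operatorname{Stab}(S)|\leq N/|K|<N/|H|$.

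I expect the main obstacle to be the counting formula $|[(S)]|=N/|\operatorname{Stab}(S)|$, specifically verifying cleanly that $\operatorname{Stab}(S)$ is a genuine subgroup acting on $S$ and that $S$ decomposes into full $\operatorname{Stab}(S)$-cosets, together with checking that the explicit sequence built in the contrapositive truly satisfies both defining conditions of $St(H,m,N)$. The delicate point in the construction is the constant-occurrence requirement across $H$-cosets, which holds because constancy on the coarser $K$-cosets automatically forces constancy on the finer $H$-cosets; the feasibility of the occurrence assignment and of choosing enough $K$-cosets relies only on the standing range $N\leq\min(m,|G|)$ and the divisibility $|K|\mid\gcd(m,N,|G|)$.
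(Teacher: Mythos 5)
Your proof is correct, and it reorganizes the argument around a different key lemma than the paper uses. The paper proves both implications contrapositively: for one direction it takes $q\notin H$ with $q(S)=(S)$, forms $K:=\left<q\right>H$ (legitimate since $G$ is abelian), and re-runs the coset-decomposition argument from Proposition \ref{prop41} to get $H\lneq K$ with $|K|$ dividing $\operatorname{gcd}(m,N,|G|)$; for the other it merely asserts that when $H\lneq K$ there is a suitable sequence built from $K$-cosets, whose class then has at most $N/|K|<N/|H|$ elements. You instead isolate the stabilizer $\operatorname{Stab}(S)$ and prove the exact orbit formula $|[(S)]|=N/|\operatorname{Stab}(S)|$, after which both directions reduce to the single statement that $|[(S)]|=N/|H|$ iff $\operatorname{Stab}(S)=H$. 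The underlying mechanics are the same (your verification that $S$ is a union of full $\operatorname{Stab}(S)$-cosets with constant occurrences, giving $|\operatorname{Stab}(S)|\mid\operatorname{gcd}(m,N,|G|)$, is exactly the paper's Proposition \ref{prop41} argument with $\left<q\right>$ replaced by the full stabilizer), but your packaging buys two things. First, it converts the paper's ``at most $N/|K|$'' bounds into exact equalities, which in particular sharpens the corollary following the proposition ($|[(S)]|=N/|H|$ for $H$ the full stabilizer, not just some divisor bound). Second, it fills a genuine gap in the paper's terse forward direction: the paper writes ``there is $S\in G/H$ that also belongs to $G/K$'' without checking that a suitable sequence with those cosets actually exists, whereas you verify feasibility explicitly --- enough $K$-cosets exist since $N\leq|G|$, the surplus $(m-N)/|K|$ is a nonnegative integer since $|K|$ divides both $m$ and $N$ and $N\leq m$, and constancy of occurrences on $K$-cosets forces constancy on the finer $H$-cosets, so the witness really lies in $St(H,m,N)$. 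One minor citation point: the description $[(S)]=\{q^{-1}(S):q\in S\}$ that you attribute to Theorem \ref{mainnewthm} is actually stated in the discussion at the start of Section 4 (it is a consequence of that theorem), but this does not affect correctness.
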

\begin{proof} Suppose that  $H \lneq K$ for some subgroup $K$ of $G$ such that  $|K|$ is a divisor of $\operatorname{gcd}(m,N,|G|)$.  Then, each coset in $G/K$ can be written as a $|K|/|H|$-disjoint union of cosets in $G/H$.  So, there is $S\in G/H$ that also belongs to $G/K$.  This implies that $[(S)]$ contains at most $N/|K|$ sequences, which is exactly not $N/|H|$ sequences.

Conversely, suppose $|[(S)]|<N/|H|$.  Then there are disjoint cosets $[q],[q']$ in $G/H$ such that $q^{-1}(S)=q'^{-1}(S)$, or equivalently, $q(S)=(S)$ for some $q\notin H$.  Again, $q^i(S)=(S)$, for all $i\in \mathbb{N}$ and hence $\left< q\right>H\subseteq S$.  Since $G$ is abelian, $K:=\left<q\right>H$ is a subgroup of $G$.  It is clear that $H\lneq K$. Using the same process as in the proof of Proposition \ref{prop41}, we conclude that $S$ is an $N/|K|$-disjoint union of cosets in $G/K$ and $|K|$ is a common factor of $m,N, |G|$.  The proof is now completed.
\end{proof}
Proposition \ref{prop41} and the proof above, yields us to conclude that:
 \begin{cor}For a suitable sequence $(S)\in St(1,m,N)$, if $|[(S)]|\neq N$, then $|[(S)]|= N/|H|$ for some subgroup $H\leq G$ such that $|H|$ is a divisor of $\operatorname{gcd}(m,N,|G|)$.
 \end{cor}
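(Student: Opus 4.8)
The plan is to pin down one canonical subgroup of $G$ that governs the size of the class, namely the stabiliser of $(S)$ under the rescaling action, and then simply count cosets. As a preliminary observation, the class is $[(S)]=\{q_1^{-1}(S),\dots,q_N^{-1}(S)\}$ for $S=\{1=q_1,\dots,q_N\}$, so $|[(S)]|\leq N$ always; hence the hypothesis $|[(S)]|\neq N$ is the same as $|[(S)]|<N$.

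First I would set up the action of $G$ on suitable sequences by $q\cdot(T):=q(T)$ (multiply every entry by $q$ and re-sort into non-decreasing order). Because $G$ is abelian and the re-sorting depends only on the underlying multiset, this is a genuine group action, and I let $H:=\{q\in G:\ q(S)=(S)\}$ be the stabiliser of $(S)$. The heart of the argument is the orbit--stabiliser bookkeeping in this concrete form: since $q\mapsto q\cdot(T)$ is an action, $q_i^{-1}(S)=q_j^{-1}(S)$ holds precisely when $q_iq_j^{-1}\in H$, i.e. when $q_iH=q_jH$. Thus the distinct members of $[(S)]$ are in bijection with the distinct cosets among $\{q_jH:\ q_j\in S\}$.

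Next I would show, exactly as in the proof of Proposition~\ref{prop41}, that $H$-invariance of the sequence forces $S$ to be a disjoint union of cosets of $H$ (with $H\subseteq S$, since $1\in S$) and forces the occurrence function to be constant on each such coset; in particular $S$ is the disjoint union of exactly $N/|H|$ cosets of $H$. Combined with the bijection above, this yields $|[(S)]|=N/|H|$. The same coset decomposition delivers the three divisibilities: $|H|$ divides $N$ because there are $N/|H|$ cosets, $|H|$ divides $m$ because the occurrences split into equal blocks of size $|H|$, and $|H|$ divides $|G|$ by Lagrange; hence $|H|$ divides $\gcd(m,N,|G|)$. Since $|[(S)]|\neq N$ forces $|H|>1$, this is consistent and completes the proof. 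Alternatively, once one checks that $H$ is the largest subgroup $K$ with $(S)\in St(K,m,N)$ --- every such $K$ fixes $(S)$ and so lies in $H$ --- the conclusion $|[(S)]|=N/|H|$ is immediate from Proposition~\ref{prop43}.

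The only genuinely delicate point is translating $H$-invariance into the three simultaneous divisibilities: one must invoke condition~(1) of Proposition~\ref{prop41} for the divisibility of $N$ and $|G|$ and condition~(2) for the divisibility of $m$, while keeping track of the fact that the stabiliser is exactly the maximal admissible subgroup in play, so that no strictly larger subgroup of order dividing $\gcd(m,N,|G|)$ with $(S)$ invariant can collapse the class any further. Everything else is a direct transcription of the arguments already established for Propositions~\ref{prop41} and~\ref{prop43}.
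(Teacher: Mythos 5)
Your argument is correct, and it takes a cleaner route than the paper does. The paper obtains this corollary implicitly, by combining Proposition \ref{prop41} with the converse direction of the \emph{proof} of Proposition \ref{prop43}: starting from a nontrivial relation $q(S)=(S)$ one repeatedly enlarges the invariant subgroup (via $K=\left<q\right>H$) until the class size matches $N/|K|$, i.e.\ the relevant subgroup emerges as the endpoint of an iteration governed by the lattice-maximality criterion of Proposition \ref{prop43}. You instead identify that endpoint a priori as the stabiliser $H=\{q\in G: q(S)=(S)\}$ of the sequence under the (well-defined, since $G$ is abelian) rescaling action, observe that $[(S)]$ is exactly the part of the orbit lying in $St(1,m,N)$, and get $|[(S)]|=N/|H|$ in one step from the coset bijection $q_i^{-1}(S)=q_j^{-1}(S)\Leftrightarrow q_iH=q_jH$ together with the coset decomposition of $S$ from Proposition \ref{prop41}'s argument; the three divisibilities ($|H|\mid N$ from the coset count, $|H|\mid m$ from constant occurrences on cosets, $|H|\mid |G|$ from Lagrange) come out of the same computation. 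What your framing buys is exactness without any maximality discussion: the orbit--stabiliser bookkeeping replaces the paper's subgroup-enlargement argument and makes the value $N/|H|$ canonical rather than ``some subgroup obtained by iterating.''

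One small caveat on your closing alternative: you say that once $H$ is known to be the largest subgroup $K$ with $(S)\in St(K,m,N)$, the conclusion is ``immediate from Proposition \ref{prop43}.'' That is slightly loose, because the hypothesis of Proposition \ref{prop43} is lattice-theoretic --- it requires $H\nleq K$ for \emph{every} admissible subgroup $K\neq H$ of $G$, not merely for those fixing $(S)$ --- and the stabiliser of a particular $(S)$ may well sit inside a larger admissible subgroup that does not fix $(S)$, in which case the statement of Proposition \ref{prop43} does not apply to $H$. What you actually need there is the converse direction of its proof (if $|[(S)]|<N/|H|$ then some $q\notin H$ fixes $(S)$, contradicting that $H$ is the full stabiliser). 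Since your primary argument is self-contained and never uses this remark, this does not affect the correctness of the proposal.
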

  This motivates us to consider all possible subgroups of $G$ that their orders are divisors of $\operatorname{gcd}(m,N,|G|)$.  Let $\{d_1,\dots,d_l \}$ be the set of all divisors of $\operatorname{gcd}(m,N,|G|)$ and suppose without loss of generality that $d_1<\cdots<d_l$.  Suppose, for each $d_i$, there are $k_i$ distinct subgroups of order $d_i$; we refer to these subgroups by $H_{ij}$, for each $1\leq i \leq l$ and $1\leq j \leq k_i$.  Denote $S_{d_i}(G,N):=\{ H_{ij} \,|\,1\leq j \leq k_i \} $, for each $i=1,\dots, l$.
\begin{lem}\label{lem45} Let $H_1, H_2\in S_{d_i}(G,N)$, for some $1\leq i \leq l$ such $H_1\neq H_2$.  If there is a suitable sequence $(S)\in St(H_1,m,N)\cap St(H_2,m,N)$, then $(S)\in St(K,m,N)$ for some $K\in S_{d_j}(G,N)$ such that $d_j>d_i$ and $H_1H_2\subseteq K$.
\end{lem}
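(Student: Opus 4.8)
The plan is to encode each suitable sequence by its occurrence function and to recast conditions (1) and (2) of Proposition \ref{prop41} as an invariance (periodicity) property under the relevant subgroup. Given a suitable sequence $(S)$ of length $m$ with underlying set $S\subseteq G$, define $f:G\to\mathbb{Z}_{\geq 0}$ by letting $f(g)$ be the number of occurrences of $g$ in $(S)$, so that $S=\{g\in G: f(g)>0\}$ and $\sum_{g\in G}f(g)=m$. I would first observe that ``$(S)\in St(H,m,N)$'' is equivalent to ``$f$ is constant on every coset of $H$'', i.e. $f(gh)=f(g)$ for all $g\in G$ and $h\in H$. Indeed, if $f$ is $H$-invariant then its support $S$ is automatically a disjoint union of $H$-cosets and the occurrences agree along each coset, which is exactly (1) and (2); conversely (1) forces $f$ to vanish on every coset missing $S$, while (2) forces $f$ to be constant on each coset meeting $S$, so $f$ is $H$-invariant.

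With this reformulation, the hypothesis $(S)\in St(H_1,m,N)\cap St(H_2,m,N)$ says precisely that $f$ is both $H_1$-invariant and $H_2$-invariant. Next I would pass to the period set
$$\operatorname{Per}(f):=\{g\in G: f(xg)=f(x)\ \text{for all }x\in G\},$$
which is a subgroup of $G$, being the stabiliser of $f$ under the right-translation action of $G$. By hypothesis $H_1,H_2\subseteq\operatorname{Per}(f)$, so the generated subgroup satisfies $\langle H_1,H_2\rangle\subseteq\operatorname{Per}(f)$. Because $G$ is abelian, $K:=H_1H_2$ is itself a subgroup and equals $\langle H_1,H_2\rangle$; hence $f$ is $K$-invariant. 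Translating back through the equivalence of the first paragraph, $S$ is a disjoint union of $K$-cosets (with $K=1\cdot K\subseteq S$ since $1\in S$) and elements of $(S)$ lying in a common $K$-coset have equal occurrences, so $(S)$ satisfies (1) and (2) with respect to $K$.

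It then remains to verify the numerical constraints so that $K\in S_{d_j}(G,N)$ with $d_j>d_i$. Put $d_j:=|K|$. Lagrange gives $d_j\mid|G|$; since $S$ is a disjoint union of $K$-cosets each of size $d_j$ we get $d_j\mid|S|=N$; and partitioning $m=\sum_{g\in G}f(g)$ into $K$-cosets $C$, on each of which $f$ takes a constant value $c_C$, gives $m=\sum_C d_j\,c_C=d_j\sum_C c_C$, whence $d_j\mid m$. Thus $d_j$ divides $\gcd(m,N,|G|)$ and $K\in S_{d_j}(G,N)$. Finally, since $H_1\neq H_2$ and $|H_1|=|H_2|=d_i$ we have $H_2\not\subseteq H_1$, so $K=H_1H_2\supsetneq H_1$ and therefore $d_j=|K|>|H_1|=d_i$; and $H_1H_2\subseteq K$ holds trivially.

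The genuinely substantive point, and the step I would be most careful about, is the passage from ``$f$ is $H_1$- and $H_2$-invariant'' to ``$f$ is $H_1H_2$-invariant''. This is exactly where abelianness of $G$ is used: it guarantees that $H_1H_2$ is a subgroup equal to $\langle H_1,H_2\rangle$, rather than a mere subset, so that the invariance propagates to the whole product. Everything else — the coset decomposition of $S$ and the three divisibilities $d_j\mid|G|,\,N,\,m$ — is a formal consequence of the $K$-invariance of $f$, and the only side remark worth recording is that $1\in S$, which ensures $K\subseteq S$ as required in condition (1).
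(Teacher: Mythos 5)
Your proof is correct and takes essentially the same route as the paper: both arguments boil down to observing that the hypothesis means $q(S)=(S)$ for every $q\in H_1H_2$, that $H_1H_2$ is a subgroup (equal to $\langle H_1,H_2\rangle$) because $G$ is abelian, and that the coset-decomposition and divisibility arguments of Proposition \ref{prop41} then place $(S)$ in $St(H_1H_2,m,N)$. Your occurrence-function formalism and the explicit verifications that $|H_1H_2|$ divides $\gcd(m,N,|G|)$ and that $d_j>d_i$ (the latter not even stated in the paper's proof) simply spell out steps the paper compresses into ``by the same arguments as in the proof of Proposition \ref{prop41}.''
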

\begin{proof} Assume that  $(S)\in S(H_1,m,N)\cap S(H_2,m,N)$; i.e., $h(S)=(S)$, for all $h\in H_1 H_2$.  Since $G$ is abelian, $H_1H_2\leq G$.  By using the same arguments as in the proof of Proposition \ref{prop41}, we conclude that $(S)\in St(H_1H_2,m,N)$, which complete the proof.
\end{proof}
We observe that if $(S)\in St(H,m,N)$, then $[(S)]\subseteq St(H,m,N)$.  Thus, by Proposition \ref{prop43} and Lemma \ref{lem45}, $St(H_{lj},m,N)/\sim$ contains exactly $$\left(
                                                                                                                                            \begin{array}{c}
                                                                                                                                              \frac{|G|}{d_l}-1 \\
                                                                                                                                              \frac{N}{d_l}-1 \\
                                                                                                                                            \end{array}
                                                                                                                                          \right) \left(
                                                                                                                                            \begin{array}{c}
                                                                                                                                              \frac{m}{d_l}-1 \\
                                                                                                                                              \frac{N}{d_l}-1 \\
                                                                                                                                            \end{array}
                                                                                                                                          \right)/(N/d_l)
$$ equivalent classes (each class has exacly $N/d_l$ suitable sequences), for each $j=1,\dots,k_l$.  However, for $i<l$, $St(H_{ij},m,N)$ may contain some $(S)\in St(H_{sj},m,N)$ (and hence all $St(H_{sj},m,N)$), for some $s>i$ and $1\leq j \leq k_s$ which these equivalent classes, $[(S)]$, have order less than $N/d_i$.   Now, for each $1\leq i \leq l$ and $1\leq j \leq k_i$, we define $$\overline{S}(H_{ij}):=\{ (S)\in St(H_{ij},m,N)\,|\, |[(S)]|=N/d_i  \},$$
$$t(N,d_i):=\left(
                                                                                                                                            \begin{array}{c}
                                                                                                                                              \frac{|G|}{d_i}-1 \\
                                                                                                                                              \frac{N}{d_i}-1 \\
                                                                                                                                            \end{array}
                                                                                                                                          \right) \left(
                                                                                                                                            \begin{array}{c}
                                                                                                                                              \frac{m}{d_i}-1 \\
                                                                                                                                              \frac{N}{d_i}-1 \\
                                                                                                                                            \end{array}
                                                                                                                                          \right)
$$
and $C_s(H_{ij})$ to be the set of all subgroups of order $d_s$ of $G$ containing $H_{ij}$, for each $1\leq s\leq l$.
By the above discussion, Proposition \ref{prop41}, \ref{prop43}, Lemma \ref{lem45} and by basic combinatorics, we compute directly that;
\begin{equation}\label{eq 41}
    |\overline{S}(H_{lj})|=t(N,d_l), \quad \hbox{ for each $j=1,\dots,k_l$},
\end{equation}
\begin{equation}\label{eq 42}
    |\overline{S}(H_{(l-1)j}|=t(N,d_{l-1}) -|C_l(H_{(l-1)j})|t(N,d_l), \quad \hbox{ for each $j=1,\dots,k_{l-1}$},
\end{equation}
and, for $2\leq v<l$, $1\leq j \leq k_{l-v}$,
\begin{equation}\label{eq 43}
    |\overline{S}(H_{(l-v)j}|=t(N,d_{l-v})-\left[\sum^{l-1}_{s=l-v+1}\sum_{H\in C_s(H_{(l-v)j})}|\overline{S}(H)| \right]-|C_l(H_{(l-v)j})| t(N,d_l)  .
\end{equation}
Since $\overline{S}(H_{ij})$'s are all distinct sets, there are exactly  $\sum_{i=1}^{l}\frac{d_i}{N}\sum_{j=1}^{k_i}|\overline{S}(H_{ij})|$ equivalent classes of order less than $N$.  So, there must be $\frac{1}{N}\left[ t_N-\sum_{i=1}^{l}\sum_{j=1}^{k_i}|\overline{S}(H_{ij})|\right]$ equivalent classes of order $N$.  Therefore, the following is immediate.

\begin{thm}\label{thm 46} The total number of near-vector spaces $\mathbb{F}^{\oplus m}$ over a finite filed $\mathbb{F}$ is exactly $\sum^{\min(m,|G|)}_{N=1}T(N)$ which each $T(N)$ is explicitly as
$$\frac{t_N}{N}+\frac{1}{N}\sum_{i=1} ^{l} (d_i-1)\sum_{j=1}^{k_i}|\overline{S}(H_{ij})|,$$
where $1<d_1<\cdots<d_l$ are all divisors of $\operatorname{gcd}(m,N,|G|)$ and $H_{ij}\in S_{d_i}(G,N)$, for each $i=1,\dots,l$ and $j=1,\dots,k_i:=|S_{d_i}(G,N)|$.  Here, $|\overline{S}(H_{ij})|$'s can be read from (\ref{eq 41}), (\ref{eq 42}) and (\ref{eq 43}), recursively.
\end{thm}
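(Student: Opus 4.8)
The plan is to count the equivalence classes in $St(1,m,N)/\!\sim$ by sorting them according to their cardinality. Multiplication by elements of $G$ (followed by reordering into non-decreasing form) acts on suitable sequences, and by the description $[(S)]=\{q^{-1}(S):q\in S\}$ each sequence $(S)\in St(1,m,N)$ carries a well-defined \emph{stabilizer subgroup} $H(S):=\{q\in G:q(S)=(S)\}$, which by Proposition~\ref{prop41} satisfies $H(S)\subseteq S$ and $|H(S)|\mid\gcd(m,N,|G|)$. Since $q^{-1}(S)=q'^{-1}(S)$ exactly when $q'q^{-1}\in H(S)$, Theorem~\ref{mainnewthm} together with Propositions~\ref{prop41} and~\ref{prop43} gives $|[(S)]|=N/|H(S)|$, the generic value $N$ occurring precisely when $H(S)$ is trivial. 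First I would note that, since each sequence has a single stabilizer, the sets $\overline{S}(H_{ij})=\{(S)\in St(H_{ij},m,N):|[(S)]|=N/d_i\}$ of sequences whose stabilizer is \emph{exactly} $H_{ij}$ are pairwise disjoint, and their union is precisely the collection of sequences lying in classes of size strictly less than $N$.

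Next I would count classes size by size. Each class contained in $\overline{S}(H_{ij})$ has exactly $N/d_i$ members, so $\overline{S}(H_{ij})$ accounts for $|\overline{S}(H_{ij})|/(N/d_i)=d_i|\overline{S}(H_{ij})|/N$ classes; summing over all $i,j$ produces $\tfrac{1}{N}\sum_{i=1}^{l}d_i\sum_{j=1}^{k_i}|\overline{S}(H_{ij})|$ classes of size below $N$. The remaining $t_N-\sum_{i,j}|\overline{S}(H_{ij})|$ sequences each sit in a full-size class, contributing $\tfrac{1}{N}\bigl(t_N-\sum_{i,j}|\overline{S}(H_{ij})|\bigr)$ additional classes. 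Adding the two counts and grouping the coefficient of each $|\overline{S}(H_{ij})|$ gives
\begin{equation*}
T(N)=\frac{1}{N}\Bigl(t_N-\sum_{i,j}|\overline{S}(H_{ij})|\Bigr)+\frac{1}{N}\sum_{i,j}d_i|\overline{S}(H_{ij})|=\frac{t_N}{N}+\frac{1}{N}\sum_{i=1}^{l}(d_i-1)\sum_{j=1}^{k_i}|\overline{S}(H_{ij})|,
\end{equation*}
which is exactly the claimed expression for $T(N)$; summing over $N$ then yields the total $\sum_{N=1}^{\min(m,|G|)}T(N)$.

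It remains to feed in the values $|\overline{S}(H_{ij})|$, and these I would obtain by inclusion--exclusion over the subgroup lattice, as already recorded in (\ref{eq 41})--(\ref{eq 43}). The quantity $t(N,d_i)$ enumerates \emph{all} $H_{ij}$-invariant suitable sequences, that is, those with $H(S)\supseteq H_{ij}$: such a sequence is a disjoint union of $N/d_i$ cosets of $H_{ij}$ with matching occurrences, hence corresponds to a suitable sequence of length $m/d_i$ on $G/H_{ij}$, which supplies the stated binomial product. From $t(N,d_i)$ one subtracts the sequences whose stabilizer strictly contains $H_{ij}$; by Lemma~\ref{lem45} every such stabilizer is a subgroup $K$ of larger admissible order $d_s$ with $s>i$ and $H_{ij}\le K$, counted once by $|\overline{S}(K)|$ (equal to $t(N,d_l)$ when $s=l$, where no larger admissible subgroup exists). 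Starting from the top divisor $d_l$ and descending gives (\ref{eq 41}), (\ref{eq 42}) and the general recursion (\ref{eq 43}).

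The main obstacle is the bookkeeping in this last step: one must be sure that the subtraction removes each over-counted sequence exactly once. This rests entirely on the stabilizer $H(S)$ being a single well-defined subgroup rather than merely a set of fixing scalars, together with Lemma~\ref{lem45}, which (using that $G$ is abelian) forces the subgroups fixing a given sequence to be closed under products and hence to form a lattice with a unique maximal element. Consequently the overlaps among the various $St(H_{ij},m,N)$ are themselves of the form $St(K,m,N)$ for larger $K$, so the inclusion--exclusion over $\bigcup_{s>i}C_s(H_{ij})$ is free of double counting and the downward recursion terminates correctly.
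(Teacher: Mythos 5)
Your proposal is correct and takes essentially the same route as the paper: there the theorem is declared ``immediate'' from the preceding discussion, which likewise partitions $St(1,m,N)$ according to exact stabilizer into the disjoint sets $\overline{S}(H_{ij})$, counts the classes of size $N/d_i$ against the full-size classes to get $T(N)=\frac{1}{N}\bigl[t_N-\sum_{i,j}|\overline{S}(H_{ij})|\bigr]+\sum_{i,j}\frac{d_i}{N}|\overline{S}(H_{ij})|$, and computes the $|\overline{S}(H_{ij})|$ by the same top-down inclusion--exclusion (\ref{eq 41})--(\ref{eq 43}) resting on Propositions \ref{prop41}, \ref{prop43} and Lemma \ref{lem45}. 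Your explicit stabilizer-subgroup formulation $H(S)$ is a slightly cleaner packaging of the paper's bookkeeping, not a different argument.
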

We see from this theorem that the number of near-vector spaces depends on the subgroups lattice of $G$.  For example, in the case $n=3, p=3$, we have $G_1=U(26)/\left< 3 \right>=\{ 1,5,7,17\}$ and, in the case $n=2, p=5$, we have  $G_2=U(24)/\left< 5 \right>=\{ 1,7,13,19\}$.   Their group structures are illustrated as in the group tables below:
\begin{center}
\begin{tabular}{c|c|c|c|c}
 \hline
  \hline
  % after \\: \hline or \cline{col1-col2} \cline{col3-col4} ...
  $G_1$ & $1$ & $5$ &$ 7$ & $17$ \\
  \hline
  $1$ & $1$ & $5$ & $7$ &$ 17$ \\
  $5$ & $5$ & $17$ & $1$ & $7$ \\
  $7$ & $7$ & $1$ & $17$ & $5$ \\
  $17$ & $17$ & $7$ & $5$ & $1$ \\
  \hline
  \hline
\end{tabular}  \quad \quad and  \quad \quad \begin{tabular}{c|c|c|c|c}
 \hline
  \hline
  % after \\: \hline or \cline{col1-col2} \cline{col3-col4} ...
  $G_2$ & $1$ & $7$ &$ 13$ & $19$ \\
  \hline
  $1$ & $1$ & $7$ & $13$ &$ 19$ \\
  $7$ & $7$ & $1$ & $19$ & $13$ \\
  $13$ & $13$ & $19$ & $1$ & $7$ \\
  $19$ & $19$ & $13$ & $7$ & $1$ \\
  \hline
  \hline
\end{tabular}  
\end{center}
We see that $G_1$ has only one subgroup of order $2$, which is $\{1,17\}$, whereas $G_2$ has three subgroups of order $2$, which are $\{1,7\}$, $\{1,13\}$ and $\{1,19\}$.  By direct calculation (listing all possible suitable sequences and then grouping them), we have tables of the number of near-vector spaces on $\mathbb{F}^{\oplus m}$ corresponding to $G_1$ and $G_2$, with $m=4,\dots,8$ as below:

\begin{center}
\begin{tabular}{c|c|c|c|c|c}
  \hline
  \hline
  % after \\: \hline or \cline{col1-col2} \cline{col3-col4} ...
  $T(N)$; $G_1$ & $m=4$ & $m=5$ & $m=6$ & $m=7$ & $m=8$ \\
  \hline
  $N=1$ & 1 & 1 & 1 & 1 & 1 \\
   $N=2$& 5 & 6 & 8 & 9 & 11 \\
  $N=3$ & 3 & 6 & 10 & 15 & 21 \\
  $N=4$ & 1 & 1 & 3 & 5 & 10 \\
  \hline
  Total  & 10 & 14 & 22 & 30 & 43 \\
  \hline
  \hline
\end{tabular}
\end{center}
and, 
\begin{center}
\begin{tabular}{c|c|c|c|c|c}
  \hline
  \hline
  % after \\: \hline or \cline{col1-col2} \cline{col3-col4} ...
  $T(N)$; $G_2$ & $m=4$ & $m=5$ & $m=6$ & $m=7$ & $m=8$ \\
  \hline
  $N=1$ & 1 & 1 & 1 & 1 & 1 \\
   $N=2$& 6 & 6 & 9 & 9 & 12 \\
  $N=3$ & 3 & 6 & 10 & 15 & 21 \\
  $N=4$ & 1 & 1 & 4 & 5 & 11 \\
  \hline
  Total  & 11 & 14 & 24 & 30 & 45 \\
  \hline
  \hline
\end{tabular}
\end{center}
In fact, this calculation agrees with the calculation using Theorem \ref{thm 46}.

\section*{Acknowledgements}
Some parts of this work were done while the first author was a visiting assistant professor at the Auburn university: he would like to thank Prof. Tin -Yau Tam, for the hospitality.  The first author also expresses his thanks to Naresuan University for financial support on this project (R2560C071).

\bigskip

\address \textbf{Kijti Rodtes} \\

{ Department of Mathematics, Faculty of Science, \\ Naresuan University, and Research Center for Academic Excellent in Mathematics \\ Phitsanulok 65000, Thailand}\\
\email{kijtir@nu.ac.th, \quad \quad and}\\

{ Department of Mathematics and Statistics, \\Auburn University, Alabama 36849, USA}\\
\email{kzr0033@auburn.edu}\\

\address \textbf{Wilasinee Chomjun} \\

{ Department of Mathematics, Faculty of Science,\\ Naresuan University, Phitsanulok 65000, Thailand}\\
\email{wilasinee\_chomjun@hotmail.com }\\

\end{document}